\def\today{\ifcase \month \or
   January \or February \or March \or April \or
   May \or June \or July \or August \or
   September \or October \or November \or December \fi
   \space\number\day , \number\year}
  \newcommand\@dotsep{4.5}
  \def\@tocline#1#2#3#4#5#6#7{\relax
     \ifnum #1>\c@tocdepth 
     \else
     \par \addpenalty\@secpenalty\addvspace{#2}%
     \begingroup \hyphenpenalty\@M
     \@ifempty{#4}{%
     \@tempdima\csname r@tocindent\number#1\endcsname\relax
        }{%
         \@tempdima#4\relax
           }%
      \parindent\z@ \leftskip#3\relax \advance\leftskip\@tempdima\relax
      \rightskip\@pnumwidth plus1em \parfillskip-\@pnumwidth
       #5\leavevmode\hskip-\@tempdima #6\relax
       \leaders\hbox{$\m@th
       \mkern \@dotsep mu\hbox{.}\mkern \@dotsep mu$}\hfill
       \hbox to\@pnumwidth{\@tocpagenum{#7}}\par
       \nobreak
        \endgroup
         \fi}
\begin{document}


\makeatletter
\@addtoreset{figure}{section}
\def\thefigure{\thesection.\@arabic\c@figure}
\def\fps@figure{h,t}
\@addtoreset{table}{bsection}

\def\thetable{\thesection.\@arabic\c@table}
\def\fps@table{h, t}
\@addtoreset{equation}{section}
\def\theequation{
\arabic{equation}}
\makeatother

\newcommand{\bfi}{\bfseries\itshape}

\newtheorem{theorem}{Theorem}
\newtheorem{acknowledgment}[theorem]{Acknowledgment}
\newtheorem{corollary}[theorem]{Corollary}
\newtheorem{definition}[theorem]{Definition}
\newtheorem{example}[theorem]{Example}
\newtheorem{lemma}[theorem]{Lemma}
\newtheorem{notation}[theorem]{Notation}
\newtheorem{proposition}[theorem]{Proposition}
\newtheorem{remark}[theorem]{Remark}
\newtheorem{setting}[theorem]{Setting}

\numberwithin{theorem}{section}
\numberwithin{equation}{section}

\newcommand{\1}{{\bf 1}}
\newcommand{\Ad}{{\rm Ad}}
\newcommand{\Alg}{{\rm Alg}\,}
\newcommand{\Aut}{{\rm Aut}\,}
\newcommand{\ad}{{\rm ad}}
\newcommand{\Borel}{{\rm Borel}}
\newcommand{\Ci}{{\mathcal C}^\infty}
\newcommand{\Cint}{{\mathcal C}^\infty_{\rm int}}
\newcommand{\Cpol}{{\mathcal C}^\infty_{\rm pol}}
\newcommand{\Der}{{\rm Der}\,}
\newcommand{\de}{{\rm d}}
\newcommand{\ee}{{\rm e}}
\newcommand{\End}{{\rm End}\,}
\newcommand{\ev}{{\rm ev}}
\newcommand{\id}{{\rm id}}
\newcommand{\ie}{{\rm i}}
\newcommand{\GL}{{\rm GL}}
\newcommand{\gl}{{{\mathfrak g}{\mathfrak l}}}
\newcommand{\Hom}{{\rm Hom}\,}
\newcommand{\Img}{{\rm Im}\,}
\newcommand{\Ind}{{\rm Ind}}
\newcommand{\Ker}{{\rm Ker}\,}
\newcommand{\Lie}{\text{\bf L}}
\newcommand{\Mt}{{{\mathcal M}_{\rm t}}}
\newcommand{\m}{\text{\bf m}}
\newcommand{\pr}{{\rm pr}}
\newcommand{\Ran}{{\rm Ran}\,}
\renewcommand{\Re}{{\rm Re}\,}
\newcommand{\so}{\text{so}}
\newcommand{\spa}{{\rm span}\,}
\newcommand{\Tr}{{\rm Tr}\,}
\newcommand{\tw}{\ast_{\rm tw}}
\newcommand{\Op}{{\rm Op}}
\newcommand{\U}{{\rm U}}
\newcommand{\UCb}{{{\mathcal U}{\mathcal C}_b}}
\newcommand{\weak}{\text{weak}}

\newcommand{\QuadrHilb}{\textbf{QuadrHilb}}
\newcommand{\LieGr}{\textbf{LieGr}}

\newcommand{\CC}{{\mathbb C}}
\newcommand{\RR}{{\mathbb R}}
\newcommand{\TT}{{\mathbb T}}

\newcommand{\Ac}{{\mathcal A}}
\newcommand{\Bc}{{\mathcal B}}
\newcommand{\Cc}{{\mathcal C}}
\newcommand{\Dc}{{\mathcal D}}
\newcommand{\Ec}{{\mathcal E}}
\newcommand{\Fc}{{\mathcal F}}
\newcommand{\Hc}{{\mathcal H}}
\newcommand{\Jc}{{\mathcal J}}
\newcommand{\Lc}{{\mathcal L}}
\renewcommand{\Mc}{{\mathcal M}}
\newcommand{\Nc}{{\mathcal N}}
\newcommand{\Oc}{{\mathcal O}}
\newcommand{\Pc}{{\mathcal P}}
\newcommand{\Qc}{{\mathcal Q}}
\newcommand{\Sc}{{\mathcal S}}
\newcommand{\Tc}{{\mathcal T}}
\newcommand{\Vc}{{\mathcal V}}
\newcommand{\Uc}{{\mathcal U}}
\newcommand{\Xc}{{\mathcal X}}
\newcommand{\Yc}{{\mathcal Y}}

\newcommand{\Bg}{{\mathfrak B}}
\newcommand{\Fg}{{\mathfrak F}}
\newcommand{\Gg}{{\mathfrak G}}
\newcommand{\Ig}{{\mathfrak I}}
\newcommand{\Jg}{{\mathfrak J}}
\newcommand{\Lg}{{\mathfrak L}}
\newcommand{\Pg}{{\mathfrak P}}
\newcommand{\Sg}{{\mathfrak S}}
\newcommand{\Xg}{{\mathfrak X}}
\newcommand{\Yg}{{\mathfrak Y}}
\newcommand{\Zg}{{\mathfrak Z}}

\newcommand{\ag}{{\mathfrak a}}
\newcommand{\bg}{{\mathfrak b}}
\newcommand{\dg}{{\mathfrak d}}
\renewcommand{\gg}{{\mathfrak g}}
\newcommand{\hg}{{\mathfrak h}}
\newcommand{\kg}{{\mathfrak k}}
\newcommand{\mg}{{\mathfrak m}}
\newcommand{\n}{{\mathfrak n}}
\newcommand{\og}{{\mathfrak o}}
\newcommand{\pg}{{\mathfrak p}}
\newcommand{\sg}{{\mathfrak s}}
\newcommand{\tg}{{\mathfrak t}}
\newcommand{\ug}{{\mathfrak u}}
\newcommand{\zg}{{\mathfrak z}}

\newcommand{\ZZ}{\mathbb Z}
\newcommand{\NN}{\mathbb N}
\newcommand{\BB}{\mathbb B}
\newcommand{\HH}{\mathbb H}

\newcommand{\ep}{\varepsilon}

\newcommand{\hake}[1]{\langle #1 \rangle }

\newcommand{\scalar}[2]{\langle #1 ,#2 \rangle }
\newcommand{\vect}[2]{(#1_1 ,\ldots ,#1_{#2})}
\newcommand{\norm}[1]{\Vert #1 \Vert }
\newcommand{\normrum}[2]{{\norm {#1}}_{#2}}

\newcommand{\upp}[1]{^{(#1)}}
\newcommand{\p}{\partial}

\newcommand{\opn}{\operatorname}
\newcommand{\slim}{\operatornamewithlimits{s-lim\,}}
\newcommand{\sgn}{\operatorname{sgn}}

\newcommand{\seq}[2]{#1_1 ,\dots ,#1_{#2} }
\newcommand{\loc}{_{\opn{loc}}}

\makeatletter
\title[On Weyl calculus in infinitely many variables]{On Weyl calculus in infinitely many variables}
\author{Ingrid Belti\c t\u a 
 and Daniel Belti\c t\u a
}
\address{Institute of Mathematics ``Simion Stoilow'' 
of the Romanian Academy, 
P.O. Box 1-764, Bucharest, Romania}
\email{Ingrid.Beltita@imar.ro}
\email{Daniel.Beltita@imar.ro}
\keywords{Weyl calculus; infinite-dimensional Heisenberg group; coadjoint orbit}
\subjclass[2000]{Primary 81S30; Secondary 22E25,22E65,47G30}
\makeatother

\begin{abstract}
We outline an abstract approach to the pseudo-differential Weyl calculus 
for operators in function spaces in infinitely many variables. 
Our earlier approach to the Weyl calculus for Lie group representations 
is extended to the case of representations associated with 
infinite-dimensional coadjoint orbits. 
We illustrate the approach by the case of infinite-dimensional Heisenberg groups.   
The classical Weyl-H\"ormander calculus is recovered for 
the Schr\"odinger representations of the finite-dimensional Heisenberg groups.
\end{abstract}

\maketitle


\section{Introduction}

The pseudo-differential Weyl calculus which takes into account a magnetic field on $\RR^n$ was recently 
developed in a series of papers including \cite{MP04}, \cite{IMP07}, \cite{MP10}, and \cite{IMP10}. 
We have shown (\cite{BB09a}, \cite{BB09b}, \cite{BB10a}, \cite{BB10d}) that a representation theoretic approach to that calculus 
can lead to a number of improvements such as an extension to the situation of magnetic fields on 
any nilpotent Lie group instead of the abelian group $(\RR^n,+)$ and, more importantly, 
establishing the relationship to the Weyl quantization discussed for instance in \cite{Ca07}. 
The latter point was settled by recovering the magnetic calculus as the Weyl quantization 
for a \emph{finite}-dimensional coadjoint orbit of a Lie group which is in general infinite-dimensional. 

In the present paper we wish to point out that this representation theoretic approach 
can also be applied in the case of certain \emph{infinite}-dimensional coadjoint orbits. 
As a by-product of this method, we provide a generalized version for 
the pseudo-differential calculus developed in \cite{AD96} and \cite{AD98} 
for the differential operators of infinite-dimensional analysis 
(see e.g., \cite{Kuo75}, \cite{Be86}, \cite{DF91}, or \cite{Bg98}). 

\section{An abstract framework for the Weyl calculus}

In this section we develop a version of  
the localized Weyl calculus of \cite{BB09a} and \cite{BB10d}, 
which is general enough for dealing with Weyl quantizations of some infinite-dimensional coadjoint orbits.  

\begin{setting}\label{setting1}
\normalfont
Let $M$ be a locally convex Lie group with Lie algebra $\Lie(M)=\mg$ and smooth exponential map 
$\exp_M\colon\mg\to M$ 
(see \cite{Ne06}), 
and $\pi\colon M\to\Bc(\Yc)$ 
a continuous unitary representation 
on the complex Hilbert space $\Yc$. 
We shall think of the dual space $\mg^*$ as a locally convex space with respect to the $\weak^*$-topology. 
Let $\UCb(\mg^*)$ be the commutative unital $C^*$-algebra of uniformly continuous bounded functions on the locally convex space $\mg^*$ 
and for every $\mu\in\UCb(\mg^*)^*$ define the function 
$$\widehat{\mu}\colon\mg\to\CC,\quad \widehat{\mu}(X)=\langle\mu,\ee^{\ie\langle\cdot,X\rangle}\rangle,$$
where  either of the duality pairings 
$\mg^*\times\mg\to\RR$ and $\UCb(\mg^*)^*\times\UCb(\mg^*)\to\CC$ 
is denoted by~$\langle\cdot,\cdot\rangle$. 
Assume the setting defined by the following data: 
\begin{itemize} 
\item a locally convex real vector space $\Xi$ 
and a Borel measurable map $\theta\colon\Xi\to\mg$, 
\item a locally convex space $\Gamma\hookrightarrow\UCb(\mg^*)^*$ with continuous inclusion map, 
where $\UCb(\mg^*)^*$ is endowed with the $\weak^*$-topology, 
\item a locally convex space $\Yc_{\Xi,\infty}\hookrightarrow\Yc$ 
with continuous inclusion map, 
\end{itemize}
subject to the following conditions: 
\begin{enumerate}
\item\label{quasi-loc_item1} 
The linear mapping  
$$\Fc_\Xi\colon\Gamma\to\UCb(\Xi),\quad 
\mu\mapsto\widehat{\mu}\circ\theta $$
is well defined and injective. 
Let us denote $\Qc_\Xi:=\Fc_\Xi(\Gamma)\hookrightarrow\UCb(\Xi)$ 
and endow it with the topology which makes the \emph{Fourier transform} 
$$\Fc_\Xi\colon\Gamma\to\Qc_\Xi $$
into a linear toplogical isomorphism. 
Note that there also exists the linear toplogical isomorphism 
$(\Fc_\Xi^*)^{-1}\colon\Gamma^*\to\Qc_\Xi^*$.
\item\label{quasi-loc_item2} 
We have the well-defined continuous sesquilinear functional 
$$\Yc_{\Xi,\infty}\times\Yc_{\Xi,\infty}\to\Qc_\Xi,\quad 
(\phi,\psi)\mapsto(\pi(\exp_M(\theta(\cdot)))\phi\mid\psi).$$
\end{enumerate}
\end{setting}

\begin{definition}\label{quasi-loc}
\normalfont
In this framework, 
the \emph{quasi-localized Weyl calculus for $\pi$ along~$\theta$} 
is the linear map $\Op\colon\Gamma^*\to\Lc(\Yc_{\Xi,\infty},\overline{\Yc}_{\Xi,\infty}^*)$ defined by 
\begin{equation}\label{quasi-loc_eq1}
(\Op(a)\phi\mid\psi)
=\langle\underbrace{(\Fc_\Xi^*)^{-1}(a)}_{\hskip20pt\in\Qc_\Xi^*},
\underbrace{(\pi(\exp_M(\theta(\cdot)))\phi\mid\psi)}_{\hskip20pt\in\Qc_\Xi}\rangle
\end{equation}
for $a\in\Gamma^*$ and $\phi,\psi\in\Yc_{\Xi,\infty}$, 
where $\overline{\Yc}_{\Xi,\infty}^*$ denotes the space of antilinear continuous functionals on ${\Yc}_{\Xi,\infty}$. 
\qed
\end{definition}

\begin{remark}\label{quasi-loc_meas}
\normalfont
In the setting of Definition~\ref{quasi-loc}, 
let us assume that the linear functional $(\Fc_\Xi^*)^{-1}(a)\in\Qc_\Xi^*$ 
is defined by a complex Borel measure on $\Xi$ denoted 
in the same way. 
For arbitrary $\phi,\psi\in\Yc_{\Xi,\infty}$,  
the function $(\pi(\exp_M(\theta(\cdot)))\phi\mid\psi)$ 
is uniformly bounded on $\Xi$, 
hence it is integrable with respect to the measure $(\Fc_\Xi^*)^{-1}(a)$ 
and equation \eqref{quasi-loc_eq1} 
takes the form 
\begin{equation}\label{quasi-loc_meas_eq1}
(\Op(a)\phi\mid\psi)
=
\int\limits_\Xi(\pi(\exp_M(\theta(\cdot)))\phi\mid\psi)\de(\Fc_\Xi^*)^{-1}(a)
\end{equation}
which is very similar to the definition of the Weyl-Pedersen calculus 
for irreducible representations of finite-dimensional nilpotent Lie groups 
with the locally convex space $\Xi$ in the role of a predual 
of the coadjoint orbit under consideration
(see for instance \cite{BB09b} and \cite{BB10g}). 

Moreover, for arbitrary $\phi,\psi\in\Yc$ we have $\Vert(\pi(\exp_M(\theta(\cdot)))\phi\mid\psi)\Vert_\infty
\le
\Vert\phi\Vert\cdot\Vert\psi\Vert$, hence by \eqref{quasi-loc_meas_eq1} 
we get 
$\vert(\Op^\theta(a)\phi\mid\psi)\vert\le \Vert(\Fc_\Xi^*)^{-1}(a)\Vert\cdot\Vert\phi\Vert\cdot\Vert\psi\Vert$. 
Thus $\Op(a)\in\Bc(\Yc)$ and $\Vert\Op(a)\Vert\le\Vert(\Fc_\Xi^*)^{-1}(a)\Vert$. 
Here $\Vert(\Fc_\Xi^*)^{-1}(a)\Vert$ denotes the norm of the measure $(\Fc_\Xi^*)^{-1}(a)$ 
viewed as an element of the dual Banach space $\UCb(\Xi)^*$.
\qed
\end{remark}

\begin{remark}\label{quasi-loc_meas_expl}
\normalfont
Assume the setting of Definition~\ref{quasi-loc} again. 
We note that, due to the continuous inclusion map 
$\Gamma\hookrightarrow\UCb(\mg^*)^*$,
every function $f\in\UCb(\mg^*)$ gives rise to 
a functional $a_f\in\Gamma^*$, $a_f(\gamma)=\langle\gamma,f\rangle$ 
for every $\gamma\in\Gamma$. 

Furthermore, let us assume that the function $f\in\UCb(\mg^*)$ 
is the Fourier transform of a Radon measure $\mu\in\Mt(\Xi)$, 
in the sense that 
$f(\cdot)=\int\limits_\Xi\ee^{\ie\langle\cdot,\theta(X)\rangle}\de\mu(X)$. 
Then it is straightforward to check that 
$\Fc_\Xi^*(\mu)=a_f$, 
hence one can use Remark~\ref{quasi-loc_meas} to see that 
$\Op(a_f)=\int\limits_\Xi\pi(\exp_M(\theta(\cdot)))\de\mu $
and $\Op(a_f)\in\Bc(\Yc)$.
\qed
\end{remark}

\subsection*{Preduals for coadjoint orbits}

The following notion recovers the magnetic preduals of \cite{BB09a} 
as very special cases. 

\begin{definition}\label{predual_def}
\normalfont
Let $\gg$ be a nilpotent locally convex Lie algebra 
and pick any coadjoint orbit $\Oc\subseteq\gg^*$ 
of the corresponding Lie group $G=(\gg,\ast)$ defined by the Baker-Campbell-Hausdorff multiplication~$\ast$. 
A \emph{predual} for $\Oc$ is any pair $(\Xi,\theta)$, 
where $\Xi$ is a locally convex real vector space 
and $\theta\colon\Xi\to\gg$ is a continuous linear map 
such that $\theta^*\vert_{\Oc}\colon\Oc\to\Xi^*$ is injective. 
If $\Xi$ is a closed linear subspace of $\gg$ and $\theta$ 
is the inclusion map $\Xi\hookrightarrow\gg$, 
then we say simply that $\Xi$ is a predual for the coadjoint orbit~$\Oc$. 
\qed
\end{definition}

The following statement provides a useful criterion for 
proving that condition~\eqref{quasi-loc_item1} in Definition~\ref{quasi-loc} 
is satisfied in the situation of Fr\'echet preduals 
(see for instance Sect.~7 in Ch.~II of \cite{Sch66}). 
Here $\Mt(\cdot)$ stands for the space of complex Radon measures 
on some topological space. 

\begin{proposition}\label{predual_inj}
Let $\gg$ be a nilpotent locally convex Lie algebra with the Lie group $G=(\gg,\ast)$. 
If $(\Xi,\theta)$ is a predual for the coadjoint orbit $\Oc\subseteq\gg^*$ 
such that $\Xi$ is barreled, 
then the linear mapping  
$\Fc_\Xi\colon\Mt(\Oc)\to\UCb(\Xi)$, 
$\mu\mapsto\widehat{\mu}\circ\theta$
is well defined and injective. 
\end{proposition}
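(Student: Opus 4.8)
The plan is to check separately that $\Fc_\Xi$ is well defined — i.e.\ that $\widehat\mu\circ\theta\in\UCb(\Xi)$ for every $\mu\in\Mt(\Oc)$ — and that it is injective; the hypothesis that $\Xi$ is barreled is used only in the first part. For $\mu\in\Mt(\Oc)$ we have $\widehat\mu(X)=\int_\Oc\ee^{\ie\langle\xi,X\rangle}\,\de\mu(\xi)$, so $\vert\widehat\mu(\theta(u))\vert\le\vert\mu\vert(\Oc)$ and $\widehat\mu\circ\theta$ is bounded. Since $\Xi$ is a topological vector space, $\widehat\mu\circ\theta$ is uniformly continuous as soon as its oscillation is controlled near the origin, and for $u,v\in\Xi$
$$\vert\widehat\mu(\theta(u))-\widehat\mu(\theta(v))\vert\le\int_\Oc\bigl\vert\ee^{\ie\langle\xi,\theta(u-v)\rangle}-1\bigr\vert\,\de\vert\mu\vert(\xi).$$
Given $\ep>0$, inner regularity of the finite Radon measure $\vert\mu\vert$ yields a compact $K\subseteq\Oc$ with $\vert\mu\vert(\Oc\setminus K)<\ep$. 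Now $\theta^*(K)$ is a $\weak^*$-compact, hence $\weak^*$-bounded, subset of $\Xi^*$, so by barreledness of $\Xi$ it is equicontinuous; thus there is a $0$-neighbourhood $U$ in $\Xi$ with $\sup_{\xi\in K}\vert\langle\xi,\theta(w)\rangle\vert=\sup_{\eta\in\theta^*(K)}\vert\langle\eta,w\rangle\vert\le\ep$ for $w\in U$. Splitting the integral above over $K$ and $\Oc\setminus K$ and using $\vert\ee^{\ie t}-1\vert\le\min\{\vert t\vert,2\}$ bounds it by $\ep(\vert\mu\vert(\Oc)+2)$ whenever $u-v\in U$, proving uniform continuity. (Without barreledness even continuity of $\widehat\mu\circ\theta$ can fail, so this is precisely where that assumption enters.)

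\emph{Injectivity.} Suppose $\widehat\mu\circ\theta=0$. Evaluating at $u=0$ gives $\mu(\Oc)=0$, and for every $u\in\Xi$ we have $\int_\Oc\ee^{\ie\langle\xi,\theta(u)\rangle}\,\de\mu(\xi)=0$. Since $\theta$ is linear and the pairing bilinear, the functions $\xi\mapsto\ee^{\ie\langle\xi,\theta(u)\rangle}$ ($u\in\Xi$) together with the constants span a unital, conjugation-invariant subalgebra $\Ac$ of the bounded continuous functions on $\Oc$, and the identities above say that $\mu$ annihilates $\Ac$. Moreover $\Ac$ separates the points of $\Oc$: by the defining property of the predual $\theta^*$ is injective on $\Oc$, so for $\xi_1\ne\xi_2$ in $\Oc$ there is $u\in\Xi$ with $\langle\xi_1,\theta(u)\rangle\ne\langle\xi_2,\theta(u)\rangle$, and replacing $u$ by a sufficiently small multiple makes $\ee^{\ie\langle\xi_1,\theta(u)\rangle}\ne\ee^{\ie\langle\xi_2,\theta(u)\rangle}$. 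It then suffices to invoke the fact that a complex Radon measure on a completely regular space which annihilates a point-separating, conjugation-invariant, unital subalgebra of its bounded continuous functions must be zero (Sect.~7, Ch.~II of \cite{Sch66}); applied here it gives $\mu=0$.

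\emph{The main obstacle.} Everything above is routine except this last cited fact, which carries the real content; I would either quote it or reproduce its proof, which runs as follows. By inner regularity of $\vert\mu\vert$ it suffices to show $\vert\mu\vert(L)=0$ for every compact $L\subseteq\Oc$; on each such $L$ the restriction $\Ac|_L$ is uniformly dense in $\Cc(L)$ by Stone--Weierstrass, and combining this with the tightness of $\mu$ (which is, up to arbitrarily small total variation, carried by compact sets) — most transparently by lifting $\mu$ to a Radon measure on the Gelfand spectrum of the uniform closure of $\Ac$, where $\Ac$ is dense and, being point-separating, matches the images of those compacta with their originals — forces the lifted measure, hence $\mu$, to vanish. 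Equivalently, $\widehat\mu\circ\theta=0$ makes every finite-dimensional marginal of the push-forward $(\theta^*)_*\mu$ have vanishing Fourier transform, hence be $0$ by the classical Euclidean uniqueness theorem, and a Radon measure is determined by its finite-dimensional marginals. This measure-theoretic statement is the crux; barreledness plays no part in it, and the nilpotency of $\gg$ enters the proposition only as ambient structure, not in the argument.
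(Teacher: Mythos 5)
The paper itself gives no argument here --- its ``proof'' is simply a pointer to the companion preprint [BB10f] --- so there is nothing in-paper to compare against; judged on its own, your proof is correct and is the natural argument one would expect that reference to contain. The two halves are sound: for well-definedness you correctly isolate where barreledness enters (a weak$^*$-compact, hence pointwise bounded, set $\theta^*(K)\subseteq\Xi^*$ is equicontinuous by Banach--Steinhaus for barreled spaces, which is exactly what the paper's citation of Sect.~7, Ch.~II of \cite{Sch66} is for), and the $\min\{|t|,2\}$ splitting over $K$ and $\Oc\setminus K$ gives uniform continuity with respect to the canonical uniformity of $\Xi$. For injectivity, the reduction to the uniqueness theorem for characteristic functionals of Radon measures (equivalently, your Gelfand-spectrum/Stone--Weierstrass argument) is right, and you correctly handle the one delicate step --- passing from the vanishing of the pushed-forward measure back to $\mu=0$ via injectivity of $\theta^*|_{\Oc}$ together with tightness of $|\mu|$. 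Two small quibbles: the measure-theoretic uniqueness lemma is not in \cite{Sch66} (that reference is about barreled spaces); it belongs to the Radon-measure literature, e.g.\ \cite{Bg98}. And in the Gelfand-spectrum version one should note explicitly that the pushforward to the spectrum is again tight (so that Riesz uniqueness applies on a possibly non-metrizable compact space); your compact-exhaustion phrasing covers this, but it is worth making explicit. Neither point is a gap in substance.
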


\begin{proof}
See \cite{BB10f}. 
\end{proof}

\subsection*{Flat coadjoint orbits}

We now introduce some terminology that claims its origins in the results of \cite{MW73} 
on representations of \emph{finite-dimensional} nilpotent Lie groups. 

\begin{definition}\label{flat_def}
\normalfont
Let $\gg$ be a nilpotent locally convex Lie algebra with the corresponding Lie group $G=(\gg,\ast)$,  
and denote by $\zg$ the center of $\gg$. 
We shall say that a coadjoint orbit $\Oc$ ($\hookrightarrow\gg^*$) 
is \emph{flat} 
if the coadjoint isotropy algebra at some point $\xi_0\in\Oc$ satisfies the condition $\gg_{\xi_0}=\zg$. 
\qed
\end{definition}

\begin{proposition}\label{flat_prop}
Assume that $\gg$ is a nilpotent locally convex Lie algebra with the Lie group $G=(\gg,\ast)$,  
and denote by $\zg$ the center of $\gg$. 
If $\dim\zg=1$, the coadjoint orbit $\Oc$ is flat, and  $\xi_0\in\Oc$ 
satisfies the condition $\xi_0\vert_{\zg}\not\equiv0$, then $\Ker\xi_0$ is 
a predual for $\Oc$ and 
the mapping 
$\Ker\xi_0\simeq\Oc,\quad X\mapsto(\Ad_G^*X)\xi_0$ 
is a diffeomorphism. 
\end{proposition}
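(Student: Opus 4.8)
The plan is to fix a generator of the one‑dimensional centre, reduce the coadjoint action to a single nilpotency computation, and then read off both conclusions. First I would choose $Z_0\in\zg$ with $\xi_0(Z_0)=1$, which is possible since $\dim\zg=1$ and $\xi_0\vert_\zg\not\equiv0$; then $\zg=\RR Z_0$ and $\gg=\Ker\xi_0\oplus\zg$ is a topological direct sum with continuous projection $q\colon\gg\to\Ker\xi_0$, $q(Y)=Y-\xi_0(Y)Z_0$. Since $\ad Z_0=0$ one has $\Ad_G(g)Z_0=\ee^{\ad W}Z_0=Z_0$ for every $g=W\in\gg=G$, so every $\xi\in\Oc$ satisfies $\xi(Z_0)=\xi_0(Z_0)=1$. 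Two continuous functionals on $\gg$ that agree on $\Ker\xi_0$ and take equal values at $Z_0$ must coincide, hence the restriction map $\Oc\to(\Ker\xi_0)^*$ is injective; as $\Ker\xi_0$ is a closed linear subspace of $\gg$, this is exactly the assertion that $\Ker\xi_0$ is a predual for $\Oc$ in the sense of Definition~\ref{predual_def}.

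The heart of the matter is the identification $G_{\xi_0}=\zg$ of the coadjoint isotropy group with the centre. The inclusion $\zg\subseteq G_{\xi_0}$ is immediate since $\ad$ vanishes on $\zg$. Conversely, because $\gg$ is nilpotent, $\ad W$ is a nilpotent operator on $\gg$ for every $W$, so $\ee^{-\ad W}-\id=-(\ad W)\circ\Phi(\ad W)$ with $\Phi(\ad W):=\sum_{j\ge0}\tfrac{(-\ad W)^j}{(j+1)!}$ a finite sum of the form $\id+(\text{nilpotent})$, hence a surjective operator commuting with $\ad W$. If $W\in G_{\xi_0}$, i.e.\ $\xi_0\bigl((\ee^{-\ad W}-\id)Y\bigr)=0$ for all $Y\in\gg$, then surjectivity of $\Phi(\ad W)$ gives $\xi_0([W,Y])=0$ for all $Y$, that is, $W\in\gg_{\xi_0}$. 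Flatness of $\Oc$ provides some $\xi'\in\Oc$ with $\gg_{\xi'}=\zg$; since the isotropy algebras along a coadjoint orbit are $\Ad_G$-conjugate and $\zg$ is $\Ad_G$-invariant, in fact $\gg_{\xi_0}=\zg$, so $W\in\zg$. This proves $G_{\xi_0}=\zg$.

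To finish, consider $\phi\colon\Ker\xi_0\to\Oc$, $X\mapsto(\Ad_G^*X)\xi_0$. Since $Z_0$ is central, $Y=q(Y)\ast(\xi_0(Y)Z_0)$ for every $Y$ and $\Ad_G^*$ is trivial on $\RR Z_0$, so the orbit map $\Psi\colon G\to\Oc$, $W\mapsto(\Ad_G^*W)\xi_0$, factors as $\Psi=\phi\circ q$; as $\Psi$ and $q$ are surjective, so is $\phi$. If $\phi(X_1)=\phi(X_2)$ with $X_1,X_2\in\Ker\xi_0$, then $W:=(-X_2)\ast X_1$ satisfies $(\Ad_G^*W)\xi_0=\xi_0$, so $W\in G_{\xi_0}=\zg$; hence $X_1=X_2\ast W=X_2+W$, and applying $\xi_0$ forces $\xi_0(W)=0$, so $W\in\zg\cap\Ker\xi_0=\{0\}$ and $X_1=X_2$. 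Thus $\phi$ is bijective. For the differentiable structure, one identifies $\Oc$ with the homogeneous space $G/G_{\xi_0}=G/\zg$; since $\gg=\Ker\xi_0\oplus\zg$, the quotient map $G=\gg\to G/\zg$ restricts to a linear topological isomorphism of $\Ker\xi_0$ onto $G/\zg$, which, traced through the identifications, coincides with $\phi$. Hence $\phi$ is a diffeomorphism. The step demanding the most care is this last one: in the locally convex category it rests on the fact that the orbit map descends to a diffeomorphism of $G/G_{\xi_0}$ onto $\Oc$ with its manifold structure as a coadjoint orbit — all the rest being the elementary computation above.
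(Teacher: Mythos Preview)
The paper itself does not prove this proposition: the proof reads in its entirety ``See \cite{BB10f}.'' So there is no in-paper argument to compare against, and your proposal must be judged on its own merits.

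Your argument is correct and cleanly organized. The three main ingredients---the observation that $\xi(Z_0)=1$ for all $\xi\in\Oc$, the passage from $G_{\xi_0}$ to $\gg_{\xi_0}$ via the invertibility of $\Phi(\ad W)=\id+(\text{nilpotent})$, and the bijectivity of $\phi$ using the factorization $Y=q(Y)\ast(\xi_0(Y)Z_0)$ together with $G_{\xi_0}=\zg$---are all sound. The reduction of flatness at \emph{some} point of $\Oc$ to flatness at $\xi_0$ via $\Ad_G$-conjugacy of isotropy algebras is the right way to handle the quantifier in Definition~\ref{flat_def}.

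Your final caveat is well placed: the only step that is not pure algebra is the identification of the smooth structure on $\Oc$. In this setting the natural (and, in the cited preprint, the intended) convention is to transport the smooth structure from $G/G_{\xi_0}$ to $\Oc$ via the orbit map; since $G/\zg=(\gg/\zg,\ast_e)$ is modeled on $\gg/\zg\cong\Ker\xi_0$ and your map $\phi$ is precisely the composition of the linear isomorphism $\Ker\xi_0\to\gg/\zg$ with that identification, the diffeomorphism claim follows. You have correctly flagged that if one instead insisted on an \emph{a priori} submanifold structure on $\Oc\subset\gg^*$, additional work in the locally convex category would be needed.
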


\begin{proof}
See \cite{BB10f}.
\end{proof}

\subsection*{Weyl calculus on flat coadjoint orbits}

\begin{setting}\label{weyl_flat}
\normalfont
Until the end of this section we assume the following setting:
\begin{itemize}
\item $\gg$ is a nilpotent locally convex Lie algebra with the Lie group $G=(\gg,\ast)$; 
\item the topological vector space underlying $\gg$ is barreled 
(for instance, $\gg$ is a Fr\'echet-Lie algebra); 
\item the center $\zg$ of $\gg$ satisfies the condition $\dim\zg=1$; 
\item the coadjoint orbit $\Oc$ ($\hookrightarrow\gg^*$) is flat, 
and  $\xi_0\in\Oc$ 
satisfies $\xi_0\vert_{\zg}\not\equiv0$;  
\item $\pi\colon G\to\Bc(\Yc)$ is an irreducible unitary representation 
such that for every $X\in\zg$ we have 
$\pi(X)=\ee^{\ie\langle\xi_0,X\rangle}\id_{\Yc}$. 
\end{itemize}
In addition we shall denote $\Xi:=\Ker\xi_0$. 
This is a closed hyperplane in $\gg$, hence it is in turn a barreled space 
(see Sect.~7 in Ch.~II of \cite{Sch66}).  

We have the linear isomorphism $\gg/\zg\simeq\Xi$, 
so $\Xi$ is also a nilpotent Lie algebra. 
Let~$\ast_e$ be
the corresponding Baker-Campbell-Hausdorff multiplication. 
We also need the mapping
$s\colon\Xi\times\Xi\to\zg$, $s(X,Y)=X\ast Y-X\ast_e Y$.
\qed
\end{setting}

The following notion is inspired by \cite[Def.~2]{Ma07}. 

\begin{definition}\label{skew_conv}
\normalfont
For arbitrary Radon measures $\mu_1,\mu_2\in\Mt(\Xi)$  
we define their \emph{twisted convolution product} $\mu_1\ast_{\xi_0}\mu_2\in\Mt(\Xi)$ 
as the push-forward of the measure 
$$\ee^{\ie\langle\xi_0,s(X_1,X_2)\rangle}\de(\mu_1\otimes\mu_2)(X_1,X_2)
\in\Mt(\Xi\times\Xi)$$
under the multiplication map 
$\Xi\times\Xi\to\Xi$, $(X_1,X_2)\mapsto X_1\ast_e X_2$. 
\qed
\end{definition}

\begin{theorem}\label{weyl_flat_prop}
Let us assume that we have a locally convex space $\Gamma$ 
such that there exists the continuous inclusion map $\Gamma\hookrightarrow\Mt(\Oc)$. 
Then the following assertions hold: 
\begin{enumerate}
\item\label{weyl_flat_prop_item1} 
The linear mapping  
$\Gamma\to\UCb(\Xi)$,
$\mu\mapsto\widehat{\mu}\vert_{\Xi}$
is well defined and injective, and gives rise to 
the topological linear isomorphism  
$\Fc_\Xi\colon\Gamma\to\Qc_\Xi$ ($\hookrightarrow\UCb(\Xi)$).  
\item\label{weyl_flat_prop_item2} 
If $a_1,a_2\in\Gamma^*$ and
$(\Fc_\Xi^*)^{-1}(a_j)\in\Qc_\Xi^*\cap\Mt(\Xi)$ 
for $j=1,2$, then 
$$\Op(a_1)\Op(a_2)
=\pi((\Fc_\Xi^*)^{-1}(a_1)\ast_{\xi_0}(\Fc_\Xi^*)^{-1}(a_2)).$$ 
\end{enumerate}
\end{theorem}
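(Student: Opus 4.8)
The plan is to deduce part~\eqref{weyl_flat_prop_item1} from Proposition~\ref{predual_inj} and then to establish the composition formula~\eqref{weyl_flat_prop_item2} by reducing, via Remark~\ref{quasi-loc_meas_expl}, to an identity about the operators $\pi(\exp_G(\theta(\cdot)))$ integrated against measures on $\Xi$. First I would invoke Proposition~\ref{flat_prop}: under the standing assumptions of Setting~\ref{weyl_flat} the hyperplane $\Xi=\Ker\xi_0$ is a predual for $\Oc$ and the map $\Xi\to\Oc$, $X\mapsto(\Ad_G^*X)\xi_0$, is a diffeomorphism. Since $\Xi$ is barreled, Proposition~\ref{predual_inj} applies with $\theta$ the inclusion $\Xi\hookrightarrow\gg$, giving that $\Fc_\Xi\colon\Mt(\Oc)\to\UCb(\Xi)$, $\mu\mapsto\widehat\mu\vert_\Xi$, is well defined and injective; restricting along the continuous inclusion $\Gamma\hookrightarrow\Mt(\Oc)$ yields the injectivity on $\Gamma$. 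One then \emph{defines} $\Qc_\Xi:=\Fc_\Xi(\Gamma)$ with the transported topology, which makes $\Fc_\Xi\colon\Gamma\to\Qc_\Xi$ a topological isomorphism by fiat; this is exactly condition~\eqref{quasi-loc_item1} of Setting~\ref{setting1}, so the quasi-localized Weyl calculus $\Op\colon\Gamma^*\to\Lc(\Yc_{\Xi,\infty},\overline{\Yc}_{\Xi,\infty}^*)$ of Definition~\ref{quasi-loc} is available. (One should remark that condition~\eqref{quasi-loc_item2} also holds, e.g.\ with $\Yc_{\Xi,\infty}$ the smooth vectors, so that $\Op$ is genuinely defined; for the statement as phrased this is part of the ambient hypotheses.)

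For part~\eqref{weyl_flat_prop_item2}, write $\mu_j:=(\Fc_\Xi^*)^{-1}(a_j)\in\Mt(\Xi)$. By Remark~\ref{quasi-loc_meas_expl} applied with $f_j(\cdot)=\int_\Xi\ee^{\ie\langle\cdot,X\rangle}\de\mu_j(X)$ (note $\theta$ is the inclusion, so $\theta(X)=X$), we have $\Op(a_j)=\int_\Xi\pi(\exp_G(X))\de\mu_j(X)\in\Bc(\Yc)$. The composition is therefore the operator-valued double integral
\begin{equation*}
\Op(a_1)\Op(a_2)=\int_\Xi\int_\Xi\pi(\exp_G(X_1))\pi(\exp_G(X_2))\,\de\mu_1(X_1)\,\de\mu_2(X_2),
\end{equation*}
where the iterated integral makes sense because the integrands are bounded by $1$ in operator norm and $\mu_1,\mu_2$ are finite Radon measures, and Fubini applies. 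Now I would use that $\exp_G=\id$ on $\gg=(\gg,\ast)$, so $\pi(\exp_G(X_1))\pi(\exp_G(X_2))=\pi(X_1\ast X_2)$. The key algebraic point is the relation $X_1\ast X_2=(X_1\ast_e X_2)\ast s(X_1,X_2)$ with $s(X_1,X_2)\in\zg$ from Setting~\ref{weyl_flat}, combined with the central-character hypothesis $\pi(Z)=\ee^{\ie\langle\xi_0,Z\rangle}\id_{\Yc}$ for $Z\in\zg$; this gives
\begin{equation*}
\pi(X_1\ast X_2)=\ee^{\ie\langle\xi_0,s(X_1,X_2)\rangle}\,\pi(X_1\ast_e X_2).
\end{equation*}
Substituting and recognizing the definition of the twisted convolution $\mu_1\ast_{\xi_0}\mu_2$ (Definition~\ref{skew_conv}) as the push-forward under $(X_1,X_2)\mapsto X_1\ast_e X_2$ of $\ee^{\ie\langle\xi_0,s(X_1,X_2)\rangle}\de(\mu_1\otimes\mu_2)$, the change-of-variables/push-forward formula for integrals turns the double integral into $\int_\Xi\pi(X)\de(\mu_1\ast_{\xi_0}\mu_2)(X)=\pi(\mu_1\ast_{\xi_0}\mu_2)$, which is the claim.

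I expect the main obstacle to be the care needed in handling the operator-valued integrals and the push-forward in the infinite-dimensional, merely-barreled setting: one must check that $X\mapsto\pi(\exp_G(X))$ is weakly (hence, by boundedness, weak-$*$) measurable so that $\int_\Xi\pi(\exp_G(X))\de\mu(X)$ is well defined as a Bochner-type or weak integral for every finite Radon measure $\mu$, that Fubini's theorem is legitimately applicable to the product measure $\mu_1\otimes\mu_2$ on $\Xi\times\Xi$, and that $s$ and $\ast_e$ are Borel (indeed continuous) so that the push-forward measure is a genuine Radon measure on $\Xi$. The point $\exp_G=\id_\gg$ and the explicit BCH-type identity $X_1\ast X_2=(X_1\ast_e X_2)\ast s(X_1,X_2)$ must also be justified from the definition of $s$ and the fact that $s$ takes values in the center; this is a finite-step nilpotent computation but should be stated cleanly. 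The algebra beyond that—matching the two sides against Definitions~\ref{skew_conv} and~\ref{quasi-loc}—is routine once these measurability and integrability points are in place.
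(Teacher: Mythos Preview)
The paper itself does not give a proof of this theorem; it simply writes ``See \cite{BB10f}'' and defers the argument to a separate preprint. So there is no in-paper proof to compare against in detail.

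That said, your argument is the natural one and matches what the surrounding material (Propositions~\ref{predual_inj} and~\ref{flat_prop}, Definition~\ref{skew_conv}, and the central-character hypothesis in Setting~\ref{weyl_flat}) is clearly set up to support; it is almost certainly what is carried out in \cite{BB10f}. Two minor remarks. First, for the integral representation of $\Op(a_j)$ you cite Remark~\ref{quasi-loc_meas_expl}, but the hypothesis you actually have is that $(\Fc_\Xi^*)^{-1}(a_j)$ is a measure, which is exactly the hypothesis of Remark~\ref{quasi-loc_meas}; that remark already gives you $\Op(a_j)=\int_\Xi\pi(\exp_G X)\,\de\mu_j(X)\in\Bc(\Yc)$ directly, without needing to identify $a_j$ as $a_{f_j}$ for some $f_j\in\UCb(\gg^*)$. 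Second, the identity $X_1\ast X_2=(X_1\ast_e X_2)\ast s(X_1,X_2)$ is immediate once you note that $s(X_1,X_2)\in\zg$ forces the BCH series for the right-hand side to collapse to the sum $(X_1\ast_e X_2)+s(X_1,X_2)$; you flag this as needing justification, but it is a one-line observation. The measurability and Fubini issues you raise are genuine but routine: $\pi$ is assumed continuous and $\ast$, $\ast_e$, $s$ are polynomial (hence continuous) by nilpotency, so weak measurability and the Radon property of the pushed-forward measure are automatic.
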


\begin{proof}
See \cite{BB10f}.
\end{proof}

\section{The special case of infinite-dimensional Heisenberg groups}

\subsection*{Infinite-dimensional Heisenberg groups}

\begin{definition}\label{heisenberg}
\normalfont
Let $\Vc$ be a real Hilbert space endowed with a symmetric injective operator $A\in\Bc(\Vc)$. 

The \emph{Heisenberg algebra} associated with the pair $(\Vc,A)$ is  
$\hg(\Vc,A)=\Vc\dotplus\Vc\dotplus\RR$ with the Lie bracket  
$[(x_1,y_1,t_1),(x_2,y_2,t_2)]=(0,0,(Ax_1\mid y_2)-(Ax_2\mid y_1))$. 
This is a nilpotent Lie algebra.  
The corresponding Lie group $\HH(\Vc,A)=(\hg(\Vc,A),\ast)$ 
is the \emph{Heisenberg group} associated with $(\Vc,A)$, with  
the multiplication given by 
$$(x_1,y_1,t_1)\ast(x_2,y_2,t_2)=
(x_1+x_2,y_1+y_2,t_1+t_2+((Ax_1\mid y_2)-(Ax_2\mid y_1))/2) $$
whenever $(x_1,y_1,t_1),(x_2,y_2,t_2)\in\HH(\Vc,A)$. 
\qed
\end{definition}

\begin{example}
\normalfont
If $\hg(\Vc,A)=\Vc\dotplus\Vc\dotplus\RR$ is a Heisenberg algebra and $\xi_0\in\hg(\Vc,A)^*$ 
as in Definition~\ref{heisenberg}, then it is easy to see that 
$\Xi:=\Vc\times\Vc\times\{0\}$ is a predual for the coadjoint orbit of~$\xi_0$. 
\qed
\end{example}

\begin{remark}
\normalfont
Let $\LieGr$ denote the category of infinite-dimensional Lie groups modeled on locally convex spaces (see \cite{Ne06}) 
and denote by $\QuadrHilb$ the category whose objects are 
the pairs $(\Vc,A)$ 
where $\Vc$ is a real Hilbert space and $A\in\Bc(\Vc)$ is a symmetric, nonnegative, injective operator. 
The morphisms between two objects $(\Vc_1,A_1)$ and $(\Vc_2,A_2)$ 
in $\QuadrHilb$, are defined as 
the continuous linear operators $T\colon \Vc_1\to\Vc_2$ 
satisfying the condition $T^*A_2T=A_1$. 
(Equivalently, $T$ becomes an isometry if $\Vc_j$ 
is endowed with the continuous inner product
$(x,y)\mapsto(Ax\mid y)_{\Vc_j}$ for $j=1,2$.)

Then we have a natural functor 
$\HH\colon\QuadrHilb\to\LieGr$ 
such that the image of any object $(\Vc,A)$ in $\QuadrHilb$ is the corresponding Heisenberg group $\HH(\Vc,A)$ constructed in Definition~\ref{heisenberg}. 
For every morphism $T\colon (\Vc_1,A_1)\to(\Vc_2,A_2)$ 
in $\QuadrHilb$ as above, we have the corresponding morphism 
$$\HH(T)\colon\HH(\Vc_1,A_1)\to  \HH(\Vc_2,A_2), \quad 
(x,y,t)\mapsto(Tx,Ty,t)$$ 
in $\LieGr$. 
\qed
\end{remark}

\subsection*{Gaussian measures and Schr\"odinger representations}
Let $\Vc_{-}$ be a real Hil\-bert space with the scalar product denoted by $(\cdot\mid\cdot)_{-}$. 
For every vector $a\in\Vc_{-}$ and 
every symmetric, nonnegative, injective, \emph{trace-class} operator $K$ on $\Vc_{-}$ 
there exists a unique probability Borel measure~$\gamma$ on $\Vc_{-}$
such that 
$$(\forall x\in\Vc_{-})\quad 
\int\limits_{\Vc_{-}}\ee^{\ie(x\mid y)_{-}}\de\gamma(y)
=\ee^{\ie(a\mid x)_{-}-\frac{1}{2}(Kx\mid x)_{-}} $$
(see for instance \cite[Th.~2.3.1]{Bg98}). 
We also have  
$$a=\int\limits_{\Vc_{-}}y\de\gamma(y)
\quad\text{ and }\quad 
Kx=\int\limits_{\Vc_{-}}(x\mid y)_{-}\cdot(y-a)\de\gamma(y) 
\text{ for all }x\in\Vc_{-}, $$
where the integrals are weakly convergent, and $\gamma$ is called the  
\emph{Gaussian measure with the mean $a$ and the variance $K$}.  

Let us assume that the Gaussian measure $\gamma$ is centered, that is, $a=0$. 
Denote $\Vc_{+}:=\Ran K$ and $\Vc_0:=\Ran K^{1/2}$ endowed 
with the scalar products $(Kx\mid Ky)_{+}:=(x\mid y)_{-}$ and 
$(K^{1/2}x\mid K^{1/2}y)_0:=(x\mid y)_{-}$, respectively, for all $x,y\in\Vc_{-}$, 
which turn the linear bijections 
$K\colon\Vc_{-}\to\Vc_{+}$ and $K^{1/2}\colon\Vc_{-}\to\Vc_0$ 
into isometries. 
We thus get the real Hilbert spaces
$$\Vc_{+}\hookrightarrow\Vc_0\hookrightarrow\Vc_{-} $$
where the inclusion maps are Hilbert-Schmidt operators, 
since $K^{1/2}\in\Bc(\Vc_{-})$ is so. 
Also, the scalar product of $\Vc_0$ extends to a duality pairing $(\cdot\mid\cdot)_0\colon\Vc_{-}\times\Vc_{+}\to\RR$. 

We also recall that for every $x\in\Vc_{+}$ the translated measure $\de\gamma(-x+\cdot)$ 
is absolutely continuous with respect to $\de\gamma(\cdot)$ 
and we have the Cameron-Martin formula
$$\de\gamma(-x+\cdot)=\rho_x(\cdot)\de\gamma(\cdot) 
\quad\text{ with }\rho_x(\cdot)=\ee^{(\cdot\mid x)_0-\frac{1}{2}(x\mid x)_0}.$$  (This actually holds true for every $x\in\Vc_0$; 
see for instance \cite[Cor.~2.4.3]{Bg98}.)

\begin{definition}\label{sch_def}
\normalfont
Let $(\Vc_{+},A)$ be an object in the category $\QuadrHilb$ such that $A\colon\Vc_{+}\to\Vc_{+}$ is a nonnegative, symmetric, injective, trace-class operator. 
Denote the scalar product of $\Vc_{+}$ by $(x,y)\mapsto (x\mid y)_{+}$ 
and let $\Vc_0$ and $\Vc_{-}$ be the completions of $\Vc_{+}$ with respect to the scalar products  
$(x,y)\mapsto (x\mid y)_0:=(A^{1/2}\mid A^{1/2}y)$ 
and $(x,y)\mapsto (x\mid y)_{-}:=(Ax\mid Ay)$, respectively.  
Then the operator $A$ has a unique extension to a nonnegative, symmetric, injective, trace-class operator $K\in\Bc(\Vc_{-})$ 
such that the above setting is recovered 
(see for instance \cite[Ch.~1, \S 1]{Be86}), hence we get the centered Gaussian measure $\gamma$ on $\Vc_{-}$ with the variance $K$. 

On the other hand, we can construct the Heisenberg group $\HH(\Vc_{+},A)$. 
The \emph{Schr\"odinger representation} 
$\pi\colon\HH(\Vc_{+},A)\to\Bc(L^2(\Vc_{-},\gamma))$ is defined by 
$$\pi(x,y,t)\phi=\rho_x(\cdot)^{1/2}
\ee^{\ie(t+(\cdot\mid y)_0+\frac{1}{2}(x\mid y)_0)}\phi(-x+\cdot) $$
whenever $(x,y,t)\in\HH(\Vc_{+},A)$ and $\phi\in L^2(\Vc_{-},\gamma)$. 
This is a continuous unitary irreducible representation 
of the Heisenberg group $\HH(\Vc_{+},A)$; 
see \cite[Th.~5.2.9 and 5.2.10]{Hob06}.
\qed
\end{definition}

\begin{remark}\label{sch_neeb}
\normalfont
We note that more general Schr\"odinger representations of infinite-dimensional Hei\-senberg groups are described in \cite[Prop.~II.4.6]{Ne00} by using cocycles and reproducing kernel Hilbert spaces. 
\qed
\end{remark}

\begin{remark}\label{sch_irred}
\normalfont
One way to see that the representation 
$\pi\colon\HH(\Vc_{+},A)\to\Bc(L^2(\Vc_{-},\gamma))$ 
of Definition~\ref{sch_def} is irreducible 
is the following 
For every integer $n\ge1$ let $\Vc_{n,+}$ denote the spectral space for $A$ corresponding to the interval $[1/n,\infty)$. 
That is, $\Vc_{n,+}$ is spanned by the eigenvectors of $A$ corresponding to eigenvalues $\ge 1/n$. 
Since $A$ is a compact operator, it follows that $\dim\Vc_{n,+}<\infty$. 
We have 
$$\Vc_{1,+}\subseteq\Vc_{2,+}\subseteq\cdots\subseteq
\bigcup\limits_{n\ge1}\Vc_{n,+}\subseteq\Vc_{+}$$
and $\bigcup\limits_{n\ge1}\Vc_{n,+}$ is a dense subspace of $\Vc_{+}$. 
Let us denote by $A_n$ the restriction of $A$ to $\Vc_{n,+}$. 
Then $\HH(\Vc_{n,+},A_n)$ is a finite-dimensional Heisenberg group, 
hence it is well known that its Schr\"odinger representation 
$\pi_n\colon\HH(\Vc_{n,+},A_n)\to\Bc(L^2(\Vc_{n,-},\gamma_n))$ 
is irreducible, where $\gamma_n$ is the Gaussian measure on the finite-dimensional space $\Vc_{n,-}$ obtained out of 
the pair $(\Vc_{+,n},A_n)$ 
by the construction outlined at the very beginning of Definition~\ref{sch_def}. 
Note that 
$$\HH(\Vc_{1,+},A_1)\subseteq\HH(\Vc_{2,+},A_2)\subseteq\cdots\subseteq
\bigcup\limits_{n\ge1}\HH(\Vc_{n,+},A_n)=:\HH(\Vc_{\infty,+},A_\infty)
\subseteq \HH(\Vc_{+},A) $$
and $\HH(\Vc_{\infty,+},A_\infty)$ is a dense subgroup of $\HH(\Vc_{+},A)$, 
hence the Schr\"odinger representation $\pi\colon\HH(\Vc_{+},A)\to\Bc(L^2(\Vc_{-},\gamma))$ is irreducible if and only if so is its restriction $\pi\vert_{\HH(\Vc_{\infty,+},A_\infty)}$. 

On the other hand, if we denote by $\1_n$ the function identically equal to 1 on the orthogonal complement $\Vc_{n+1,+}\ominus\Vc_{n,+}$, then it is straightforward to check that the operator 
$$L^2(\Vc_{n,-},\gamma_n)\to L^2(\Vc_{n+1,-},\gamma_{n+1}), 
\quad f\mapsto f\otimes \1_n$$
is unitary and intertwines the representations $\pi_n$ and $\pi_{n+1}$. 
We can thus make the sequence of representations $\{\pi_n\}_{n\ge1}$ into an inductive system of irreducible unitary representations and then their inductive limit $\pi\vert_{\HH(\Vc_{\infty,+},A_\infty)}
=\mathop{\rm ind}\limits_{n\to\infty}\pi_n$ is irreducible 
(see for instance \cite{KS77}). 
As noted above, this implies that the Schr\"odinger representation 
$\pi\colon\HH(\Vc_{+},A)\to\Bc(L^2(\Vc_{-},\gamma))$ 
of Definition~\ref{sch_def} is irreducible. 
\qed
\end{remark}

The infinite-dimensional pseudo-differential calculus of \cite{AD96} and \cite{AD98} can be recovered as a quasi-localized Weyl calculus for 
the Schr\"odinger representations introduced in Definition~\ref{sch_def} above. 
Compare for instance \cite[Prop.~3.7]{AD98}.

\subsection*{Acknowledgment} 
The second-named author acknowledges partial financial support
from the Project MTM2007-61446, DGI-FEDER, of the MCYT, Spain, 
and from the CNCSIS grant PNII - Programme ``Idei'' (code 1194).


\begin{thebibliography}{100000}

\bibitem[AD96]{AD96}
\textsc{S.~Albeverio, A.~Daletskii}, 
Asymptotic quantization for solution manifolds of some infinite-dimensional Hamiltonian systems. 
\textit{J. Geom. Phys.} \textbf{19} (1996), no.~1, 31--46.

\bibitem[AD98]{AD98}
\textsc{S.~Albeverio, A.~Daletskii}, 
Algebras of pseudodifferential operators in $L_2$ given by smooth measures on Hilbert spaces. 
\textit{Math. Nachr.} \textbf{192} (1998), 5--22.

\bibitem[BB09a]{BB09a} 
\textsc{I.~Belti\c t\u a, D.~Belti\c t\u a}, 
Magnetic pseudo-differential Weyl calculus on nilpotent Lie groups. 
{\it Ann. Global Anal. Geom.} \textbf{36} (2009), no. 3, 293--322. 

\bibitem[BB09b]{BB09b}
\textsc{I.~Belti\c t\u a, D.~Belti\c t\u a}, 
A survey on Weyl calculus for representations of nilpotent Lie groups. 
In: P.~Kielanowski, S.T.~Ali, A.~Odzijewicz, M.~Schlichenmeier, Th.~Voronov (eds.), 
{\it XXVIII Workshop on Geometrical Methods in Physics}, 
AIP Conf. Proc., Amer. Inst. Phys., 1191, Melville, NY, 2009, 
pp.~7--20.


\bibitem[BB10a]{BB10a}
\textsc{I.~Belti\c t\u a, D.~Belti\c t\u a}, 
Uncertainty principles for magnetic structures on certain coadjoint orbits. 
{\it J. Geom. Phys.} \textbf{60} (2010), no.~1, 81--95.

\bibitem[BB10b]{BB10b}
\textsc{I.~Belti\c t\u a, D.~Belti\c t\u a}, 
Smooth vectors and Weyl-Pedersen calculus 
for representations of nilpotent Lie groups. 
\textit{An. Univ. Bucure\c sti Mat.} \textbf{58} (2010), no.~1, 17--46. 
(Preprint arXiv:0910.4746v1 [math.RT].).

\bibitem[BB10c]{BB10c} 
\textsc{I.~Belti\c t\u a, D.~Belti\c t\u a}, 
Modulation spaces of symbols for representations of nilpotent Lie groups. 
\textit{J. Fourier Anal. Appl.} (to appear). 

\bibitem[BB10d]{BB10d}
\textsc{I.~Belti\c t\u a, D.~Belti\c t\u a}, 
Continuity of magnetic Weyl calculus. 
\textit{Preprint} arXiv: 1006.0585v3 [math.AP].

\bibitem[BB10e]{BB10e}
\textsc{I.~Belti\c t\u a, D.~Belti\c t\u a}, 
Algebras of symbols associated with the Weyl calculus for Lie group representations. 
\textit{Preprint} arXiv: 1008.2935v1 [math.FA].

\bibitem[BB10f]{BB10f}
\textsc{I.~Belti\c t\u a, D.~Belti\c t\u a}, 
Weyl quantization for infinite-dimensional coadjoint orbits. 
\textit{Preprint}, 2010. 

\bibitem[BB10g]{BB10g}
\textsc{I.~Belti\c t\u a, D.~Belti\c t\u a},
\textit{Weyl Calculus for Lie Group Representations} 
(forthcoming monograph).
 
\bibitem[Be86]{Be86}
\textsc{Yu.M.~Berezanski\u\i}, 
\textit{Selfadjoint Operators in Spaces of Functions of Infinitely Many Variables}. 
Translations of Mathematical Monographs, 63. American Mathematical Society, Providence, RI, 1986.

\bibitem[Bg98]{Bg98}
\textsc{V.I.~Bogachev}, 
\textit{Gaussian Measures}. 
Mathematical Surveys and Monographs, 62. 
American Mathematical Society, Providence, RI, 1998.

\bibitem[Ca07]{Ca07} 
\textsc{B.~Cahen}, 
Weyl quantization for semidirect products.  
\textit{Differential Geom. Appl.} \textbf{25} (2007), no.~2, 177--190.

\bibitem[DF91]{DF91}
\textsc{Yu.L.~Dalecky, S.V.~Fomin}, 
\textit{Measures and Differential Equations in Infinite-Dim\-ensional Space}. Mathematics and its Applications (Soviet Series), 76. 
Kluwer Academic Publishers Group, Dordrecht, 1991.

\bibitem[H\"ob06]{Hob06}
\textsc{M.G.~H\"ober}, 
\textit{Pseudodifferential Operators on Hilbert Space Riggings with Associated $\Psi^*$-Algebras and Generalized H\"ormander Classes}. 
PhD Thesis, Johannes Gutenberg-Universit\"at im Mainz, 2006. 


\bibitem[IMP07]{IMP07}
\textsc{V.~Iftimie, M.~M\u antoiu, R.~Purice}, 
Magnetic pseudodifferential operators. 
{\it Publ. Res. Inst. Math. Sci.} \textbf{43} (2007), no.~3, 585--623.

\bibitem[IMP10]{IMP10}
\textsc{V.~Iftimie, M.~M\u antoiu, R.~Purice}, 
Commutator criteria for magnetic pseudodifferential operators
{\it Comm. Partial Differential Equations} \textbf{35} (2010), no.~6, 1058–-1094.

\bibitem[KS77]{KS77}
\textsc{V.I.~Kolomycev, Ju.S.~Samoilenko}, 
Irreducible representations of inductive limits of groups. (Russian) 
\textit{Ukrain. Mat. Zh.} \textbf{29} (1977), no. 4, 526--531. 

\bibitem[Kuo75]{Kuo75}
\textsc{H.H.~Kuo}, 
\textit{Gaussian Measures in Banach Spaces}. 
Lecture Notes in Mathematics, Vol.~463. Springer-Verlag, Berlin-New York, 1975.

\bibitem[Ma07]{Ma07}
\textsc{J.-M.~Maillard}, 
Explicit star products on orbits of nilpotent Lie groups 
with square integrable representations. 
\textit{J. Math. Phys.} \textbf{48} (2007), no.~7, 073504.

\bibitem[MP04]{MP04}
\textsc{M.~M\u antoiu, R.~Purice}, 
The magnetic Weyl calculus. 
{\it J. Math. Phys.} \textbf{45} (2004), no.~4, 1394--1417.

\bibitem[MP10]{MP10}
\textsc{M.~M\u antoiu, R.~Purice}, 
The modulation mapping for magnetic symbols and operators. 
\textit{Proc. Amer. Math. Soc.} \textbf{138} (2010), no.~8, 2839--2852.

\bibitem[MW73]{MW73}
\textsc{C.C.~Moore, J.A.~Wolf}, 
Square integrable representations of nilpotent groups.  
\textit{Trans. Amer. Math. Soc.} \textbf{185} (1973), 445--462. 

\bibitem[Ne00]{Ne00} 
\textsc{K.-H.~Neeb},
\textit{Holomorphy and Convexity in Lie Theory}. 
de Gruyter Expositions in Mathematics, 28. 
Walter de Gruyter \& Co., Berlin, 2000.

\bibitem[Ne06]{Ne06} 
\textsc{K.-H.~Neeb}, 
Towards a Lie theory of locally convex groups. 
\textit{Japanese J. Math.} \textbf{1} (2006), no.~2, 291--468.

\bibitem[Sch66]{Sch66}
H.H.~Schaefer, 
\textit{Topological Vector Spaces}, 
Macmillan, New York, 1966. 

\end{thebibliography}
\end{document}